\documentclass[aip, cha, twocolumn, groupedaddress, showpacs, floatfix, letterpaper, 10pt]{revtex4-2}



\usepackage{amsmath, amssymb, amsthm}
\usepackage[english]{babel}
\usepackage[latin1]{inputenc}
\usepackage{times}
\usepackage{todonotes,color}
\usepackage{xy}
\usepackage{graphicx}
\usepackage{verbatim}
\usepackage{enumerate}
\usepackage{url}

\newcommand{\eps}{\varepsilon}

\newcommand{\cG}{\mathcal{G}}
\newcommand{\cM}{\mathcal{M}}

\newcommand{\DZ}{\mathrm{DZ}}
\newcommand{\Tg}{\mathrm{T}}
\newcommand{\ud}{\mathrm{d}}

\newcommand{\res}{\mathrm{res}}
\newcommand{\iin}{\mathrm{in}}

\newcommand{\Er}{\bbR}
\newcommand{\bbT}{\mathbb{T}}
\newcommand{\bbZ}{\mathbb{Z}}
\newcommand{\bbR}{\mathbb{R}}

\newcommand{\vp}[2]{\left\langle#1, #2\right\rangle}

\newcommand{\nud}{\nu}
\newcommand{\norm}[1]{\left\|#1\right\|}
\newcommand{\abs}[1]{\left|#1\right|}

\newcommand{\rset}[2]{\left\lbrace\, #1\,\left|\;#2\right.\right\rbrace}

\newcommand{\set}[2]{\rset{#1}{#2}}
\newcommand{\sset}[1]{\left\lbrace #1\right\rbrace}

\newtheorem{prop}{Proposition}
\newtheorem{lemm}{Lemma}

\newtheorem{defn}{Definition}

\theoremstyle{remark}
\newtheorem{rem}{Remark}

\newcommand{\hf}{\mathrm{h}}
\newcommand{\gf}{{\mathrm{g}}}
\newcommand{\ff}{\mathrm{f}}
\newcommand{\If}{\mathrm{I}}
\newcommand{\Ns}{\mathrm{N}}
\newcommand{\PR}{\mathrm{PR}}

\DeclareMathOperator{\grad}{grad}

\begin{document}


\title{Dead zones and phase reduction of coupled oscillators} 



\author{Peter Ashwin${}^{1,2}$, Christian Bick${}^{1,3}$, Camille Poignard${}^{1,2}$}
\affiliation{
${}^1$Centre for Systems, Dynamics and Control and Department of Mathematics, University of Exeter, Exeter EX4 4QF, UK\\
${}^2$EPSRC Centre for Predictive Modelling in Healthcare, University of Exeter, Exeter EX4 4QF, UK\\
${}^3$Department of Mathematics, Vrije Universiteit Amsterdam, De Boelelaan 1111, Amsterdam, the Netherlands
}

\date{\today}

\begin{abstract}
A dead zone in the interaction between two dynamical systems is a region of their joint phase space where one system is insensitive to the changes in the other. These can arise in a number of contexts, and their presence in phase interaction functions has interesting dynamical consequences for the emergent dynamics. In this paper, we consider dead zones in the interaction of
general coupled dynamical systems. For weakly coupled limit cycle oscillators, we investigate criteria that give rise to dead zones in the phase interaction functions. We give applications to coupled multiscale oscillators where coupling on only one branch of a relaxation oscillation can lead to the appearance of dead zones in a phase description of their interaction.
\end{abstract}

\maketitle

\allowdisplaybreaks

\begin{quotation}
The collective dynamics of networks of coupled units depends not only on which units are connected but also on how they are connected. In certain physical systems one can observe that network connections may be state dependent in the sense that links can be temporarily disabled. For example, in networks of neural oscillators, a unit may have a refractory period and be insensitive to input after sending an action potential. This can be mathematically captured by the concept of a ``dead zone'' in the coupling function. Building on recent work~\cite{AshBicPoi2019}, we generalize the notion of a dead zone to general network dynamical systems. We focus on the case of coupled oscillator networks. Even if the coupled nonlinear oscillatory processes do not possess dead zones, the effective phase dynamics for weak coupling may possess a dead zone. On the other hand, dead zones of interaction for limit cycle oscillators may or may not become dead zones for a phase reduced system. We make this explicit for networks of coupled relaxation oscillators where the oscillators are shaped by the separation of time scales and the geometry of critical manifolds.
\end{quotation}

\section{Introduction}

The collective dynamics of a network of~$N$ coupled dynamical units depends not only on the network structure (i.e., which unit is coupled to which other unit) but also on the functional form of the interactions~\cite{Stankovski2017}. It is well known that various types of dynamical effects such as chaos and synchronization can be understood using such models even for relatively small numbers of oscillators \cite{anishchenko2007nonlinear}.

However, many biological oscillators are insensitive to inputs in a particular state~\cite{UriTei2019} and this may lead to effects that are not typical for ``generic'' coupling. Neural oscillators with a refractory period behave similarly: After emitting an action potential, there is a ``refractory'' period in which the neuron does not react to further input~\cite{AshCooNic2016}.

Even if the network connections themselves remain fixed, the functional form of the network interactions can lead to \emph{effective} decoupling of nodes for certain states of the network dynamical system. In this case, the interaction function has \emph{dead zones}, which gives rise to an \emph{effective} interaction graph as a subgraph of the underlying structural network. In a recent paper~\cite{AshBicPoi2019}, we explored the dynamical consequences of dead zones for a class of network dynamical systems. Specifically, we formalized the notion of a dead zone and the effective interaction graph for averaged phase oscillator networks in terms of their phase interaction functions. 
Such phase oscillator networks can be derived from networks of nonlinear oscillators through a phase reduction to describe their dynamics.
Here we consider dead zones in more general networks of nonlinear oscillators. Moreover, we elucidate the question of how dead zones may emerge in the effective phase dynamics in a phase reduction. We note this may emerge in the phase description, whether or not the nonlinear oscillators have dead zones in their coupling.

For a general network dynamical system, we assume that the phase space of each node is a smooth manifold~$\cM$
with tangent bundle~$\Tg\cM$, and denote by~$\Tg_x\cM$ the tangent space at $x\in\cM$: Generally, this will be either~$\Er^d$ or the torus $\bbT := \Er/2\pi\bbZ$.
Consider an additively coupled network dynamical system with~$N$ similar nodes, where the state of node~$k$ is determined by $x_k\in\cM$ and by selective interactions. Specifically, the network dynamics on~$\cM^N$ is determined by the ordinary differential equation (ODE)
\begin{equation}\label{eq:NetworkA}
\dot x_k := \frac{\ud x_k}{\ud t} = \ff_k(x_k) + \eps\sum_{j=1,j\neq k}^N A_{jk} \gf_{jk}(x_j, x_k),
\end{equation}
where the functions~$\ff_k:\cM\to \Tg\cM$ determine the intrinsic node dynamics, $\gf_{jk}:\cM^2\to \Tg\cM$ for $j\neq k$ are the \emph{coupling functions}, $A_{jk}\in\{0,1\}$ are the coefficients of the adjacency matrix that determine the network structure, and the parameter $\eps>0$ is a coupling strength. For all-to-all coupling of identical units, the node dynamics as well as the coupling are assumed to be identical and $A_{jk}=1$ for $j\neq k$. In this special case, Equation~\eqref{eq:NetworkA} reads
\begin{equation}\label{eq:Network}
\dot x_k = \ff(x_k) + \eps\sum_{j=1,j\neq k}^N \gf(x_j, x_k),
\end{equation}
where $\ff:\cM\to \Tg\cM$ and $\gf:\cM^2\to \Tg\cM$. We will concentrate on systems of the form~\eqref{eq:Network} for the remainder of this paper. It is straightforward to generalize some of these result to networks~\eqref{eq:NetworkA} but the notation becomes more cumbersome~\cite{AshBicPoi2019}.

Dead zones for~\eqref{eq:Network} are characterized by a vanishing interaction function~$\gf$. An open subset of~$\cM^2$ is a dead zone for~\eqref{eq:Network} if~$\gf$ is identically zero on this subset. We make this notion precise below. On the one hand, the coupled phase oscillator networks considered in Reference~\onlinecite{AshBicPoi2019} of the form
\begin{equation}\label{eq:NetworkOscAvg}
\dot \theta_k = \omega + \eps\sum_{j=1}^N \hf(\theta_j-\theta_k)
\end{equation}
for $\theta_k\in\bbT$ are a special case of~\eqref{eq:Network} with $\cM=\bbT$ and $x_k=\theta_k$. In this context, the interaction between oscillators~$j$ and~$k$ is determined by the coupling function~$\hf$: A dead zone is an open connected set of phase differences where $\hf=0$. On the other hand, under suitable assumptions, system~\eqref{eq:NetworkOscAvg} can be \emph{derived} from a nonlinear oscillator network~\eqref{eq:Network} with $x_k\in\Er^d$ and serve as a description of the effective dynamics.
Then the coupling function~$\hf$ can be derived from~\eqref{eq:Network} in terms of the oscillator properties~$\ff$ and the interactions~$\gf$.

In this paper we focus on the latter case and tackle the relationship between dead zones in the nonlinear oscillator system~\eqref{eq:Network} with~$x_k\in\Er^d$ and dead zones in the phase oscillator network~\eqref{eq:NetworkOscAvg}. For example, does the existence of a dead zone for~\eqref{eq:Network} imply the existence of a dead zone for~\eqref{eq:NetworkOscAvg}? Are there ways that the effective phase dynamics~\eqref{eq:NetworkOscAvg} have a dead zone while~\eqref{eq:Network} does not?

For the remainder of this introduction we generalize some concepts about dead zones to the setting~\eqref{eq:Network}. We focus on the case of separable coupling functions where the coupling interaction can be written as a product of response and input functions. Section~\ref{sec:Oscillators} considers questions related to when dead zones for the interactions of weakly coupled oscillators result in dead zones for the averaged phase equations~\cite{AshBicPoi2019}. Section~\ref{sec:multiscale} examines weakly coupled multiscale oscillators and states some explicit conditions that result in dead zones for interaction. This continues with discussion of an example of coupled FitzHugh--Nagumo oscillators with coupling through the fast variable.  These mechanisms are also relevant in contexts beyond oscillators, for example, for the synchronization of chaotic systems where phase information can be extracted~\cite{anishchenko1992synchronization}.
We finish with a brief discussion in Section~\ref{sec:discuss}.

\subsection{Dead zones of interaction for coupled dynamical systems}

The notions of dead zones and effective coupling graphs considered in Reference~\onlinecite{AshBicPoi2019} generalize naturally to network dynamical systems~\eqref{eq:Network}. Suppose that~$A$ is a set and~$X$ a vector space. Given a function~$f:A\to X$ write 
\[\Ns(f) = \set{x\in A}{f(x)=0}\]
for the zero set of~$f$.

\begin{defn}\label{def:DZ}
A \emph{dead zone} of the coupling function~$\gf$ is a maximal connected open set $U\subset\cM^2$ such that $\gf(U)=0$.
\end{defn}

For a given coupling function~$\gf$ let~$\DZ(\gf)$ denote the union of all dead zones. The coupling function~$\gf$ has \emph{simple dead zones} if~$\DZ(\gf)$ is connected, that is, there is exactly one dead zone.
Effective coupling can now be encoded by a graph as in Reference~\onlinecite{AshBicPoi2019}; for completeness, we generalize the notion of an effective coupling graph to~\eqref{eq:Network}.

\begin{defn}
The \emph{effective coupling graph~$\cG_\gf(x)$ at $x\in\cM^N$} is a directed graph on~$N$ vertices with edges
\[
E(\cG_\gf(x)) = \set{j\to k}{(x_j,x_k) \not\in\DZ(\gf)}.
\]
\end{defn}

\subsection{Separable coupling functions}
\label{sec:sep}

For many systems of interest, the coupling function~$\gf$ has additional properties. If~$X$ is a vector space, we denote by
$v\odot w$ the Hadamard (element-wise) product of $v,w\in X$, i.e., the vector in~$X$ with components 
$[v\odot w]_j=v_jw_j$.
We say a coupling function~$\gf$ for~\eqref{eq:Network} is \emph{separable} if it can be written as 
\[
\gf(x_j,x_k) = \gf^\iin(x_j)\odot\gf^\res(x_k)
\]
where $\gf^\iin:\cM\to \Tg\cM$ the \emph{input function} and $\gf^\res:\cM\to \Tg\cM$ is the \emph{response function}. Many commonly studied network dynamical systems have separable coupling functions. These include:

\paragraph{Phase oscillator networks.} The state of a phase oscillator is given by $x_j\in \cM = \bbT$ for each~$j$. If $\ff_k = \omega\in\Er$ and separable $\gf(x_j,x_k) = Z(x_k)I(x_j)$ with $Z:\bbT\to\Er$ and $I:\bbT\to\Er$ then the dynamics are determined by
\[
\dot x_k = \omega + \sum_{j\neq k}Z(x_k)I(x_j),
\]
i.e., with $\gf^\res=Z$, $\gf^\iin=I$. Such networks arise from phase reductions; we will explore these further in Section~\ref{sec:Oscillators}.

\paragraph{State-independent coupling.}
The master stability function approach~\cite{Pecora1998} is a classical tool to determine the stability of synchrony in networks of the form
\begin{align*}
\dot x_k = \ff(x_k) + \eps\sum_{j=1}^N A_{kj}\gf(x_j),
\end{align*}
with all the~$x_k$ in $\Er^d$. The coupling function is separable with $\gf^\iin=\gf$ and $\gf^\res = 1$.

\paragraph{Diffusive coupling.}
For network dynamical systems with ``diffusive coupling'' the dynamics of a given node is depends on the difference between its state and the states of the nodes it receives input from.
Specifically, the state of node~$k$ is determined by $x_k\in\Er^d$ and evolves according to
\begin{align}\label{eq:NetworkDiffusive}
\dot x_k = \ff(x_k) + \sigma\sum_{j=1}^N A_{kj}\gf(x_j-x_k).
\end{align}
In general, diffusive coupling through a nonlinear term $\gf(x_j-x_k)$ is not separable. However, if the coupling is linear, that is, $\gf(x_j-x_k)= \sigma(x_j-x_k)$ for $\sigma\in\bbR$, or we consider the linearized dynamics of~\eqref{eq:NetworkDiffusive} around the synchronization manifold $\sset{x_1=\dotsb=x_d}$, we have a separable coupling function with $\gf^\iin(x) = x$ and $\gf^\res = 1$ since we can rewrite the global dynamics as
\begin{align*}
\dot x_k = \ff(x_k) - N\sigma x_k+ \eps \sigma \sum_{j=1}^N A_{kj} x_j.
\end{align*}

Note that if either~$\Ns(\gf^\res)$ or~$\Ns(\gf^\iin)$ contains an open set~$U$ in~$\cM$ this naturally induces a dead zone. More precisely, suppose that~$U$ is a maximal open set such that $U \subset \Ns(\gf^\res)$ and~$\gf^\iin$ does not vanish on any open set. 
Then $ U\times \cM \subset\cM^2$ is a dead zone for $\gf=\gf^\res \odot \gf^\iin$ (an \emph{input dead zone}). Conversely, if $U \subset \Ns(\gf^\iin)$ and~$\gf^\res$ does not vanish on any open set then $\cM \times U \subset\cM^2$ is a dead zone for $\gf=\gf^\res \odot \gf^\iin$ (an \emph{output dead zone}).

\section{Dead zones in weakly coupled oscillator networks}
\label{sec:Oscillators}

We now assume that the intrinsic dynamics of each node is oscillatory. Specifically, suppose that $\cM=\Er^d$ with $d > 1$ and the uncoupled node dynamics $\dot x = \ff(x)$, $x\in\Er^d$, gives rise to an asymptotically stable limit cycle solution~$\gamma(t)$ in~$\Er^d$ of minimal period~$\tau>0$ so that $\gamma(t+\tau) = \gamma(t)$ for all~$t>0$.
In other words, the uncoupled network contains a normally hyperbolic invariant torus~$\bbT^N$ that persists for weak coupling $|\eps|\ll 1$. The main idea of a \emph{phase reduction} is to approximate the dynamics of the coupled system~\eqref{eq:Network} by the evolution of phases $\theta = (\theta_1, \dotsc, \theta_N)\in\bbT^N$. This reduces the dimension of the phase space from~$\bbR^{dN}$ to~$\bbT^N$.  
Here we are interested how dead zones of the full system~\eqref{eq:Network} on~$\bbR^{dN}$ with oscillatory intrinsic dynamics given by~$\ff$ induce dead zones for the dynamics of the phase variables.

\subsection{Weak coupling and dead zones in phase oscillator networks}

Before we consider dead zones, we briefly review the main ingredients of a (first-order) phase reduction; for reviews of this well-established technique see References~\onlinecite{Izhikevich2007,AshCooNic2016,PD2019}.

\subsubsection{Phase response curves and phase reduction}

Consider a single uncoupled oscillator 
\begin{equation}\label{eq:SingleOsc}
\dot x = \ff(x)
\end{equation}
whose dynamics includes an asymptotically stable limit cycle~$\gamma(t)$ of minimal period~$\tau$; here we suppress the oscillator index. The set $\Gamma=\set{\gamma(t)}{t \in \Er}$ is a flow-invariant circle that we can parametrize using a phase variable~$\nud: \Gamma\to\bbT$ such that $\dot\nud = \omega$ with $\omega = 2\pi/\tau$; this function is invertible on the limit cycle. Indeed, for any point~$y_0$ with trajectory~$y(t)$ in the basin of attraction of~$\Gamma$, we define its asymptotic phase~$\nu(y_0):=\psi\in\bbT$ such that
\[\norm{\gamma(\psi/\omega+t)-y(t)}\to 0\]
as $t\to\infty$. More precisely, the \emph{isochron} for $\vartheta \in \bbT$ is the $(d-1)$-dimensional level set 
\[
\Theta_{\vartheta} = \set{x\in\Er^d}{\nud(x) = \vartheta}
\]
of the phase function. Isochrons are defined in the basin of attraction of the limit cycle. Sometimes, with abuse of notation, we write $\nu^{-1}(\psi)$ for the point on~$\Gamma$ with phase~$\psi$. 

\begin{defn}
The \emph{(infinitesimal) phase response curve} of the oscillator is the function
\[Z:\bbT\to\bbR^d,~~ \vartheta\mapsto\grad(\nud)|_{\nu^{-1}(\vartheta)}\]
where $\grad$ denotes the gradient.
\end{defn}

The phase response curve encodes how the phase of an oscillator changes with respect to an infinitesimal perturbation. Now suppose that the oscillator is subject to a weak forcing given by an input~$\If(t)$, i.e., consider
\[
\dot x = \ff(x) + \eps \If(t)+O(\eps^2)
\]
with~$\If$ bounded and $|\eps|\ll 1$. Expanding in the small parameter~$\eps$, the dynamics of the phase variable $\vartheta = \nu(x)$ close to~$\Gamma$ up to first order is
\begin{align}
    \dot\vartheta &= \omega+\eps \langle Z(\vartheta),\If(t) \rangle +O(\eps^2),
    \label{eq:PhaseDyn}
\end{align}
where $\langle\,\cdot\,,\cdot\,\rangle$ denotes the usual scalar product on~$\bbR^d$. In other words, the effect of the forcing on the phase---up to first order---is given by the projection of the forcing~$\gf$ on the phase response curve~$Z$.

Consider a network that consists of~$N$ coupled oscillatory units~\eqref{eq:Network}, that is, oscillator~$k$ is forced by the other oscillators according to
\[
\If_k(t) = \sum_{j=1,j\neq k}^N\gf(x_j, x_k).
\]
If all $x_k = \nud^{-1}(\theta_k)$ are close to~$\Gamma$ then we can define a function $\hat\gf:\bbT^2\to\bbR^d$ by $\hat\gf(\theta_j,\theta_k) := \gf(\nud^{-1}(\theta_j), \nud^{-1}(\theta_k))$. This yields the \emph{induced phase interaction function} $\gf^\PR:\bbT^2\to\bbR$ by
\begin{equation}\label{eq:gPR}
\begin{split}
\gf^\PR({\theta_j, \theta_k}) &:= \langle Z(\theta_k),\gf(\nud^{-1}(\theta_j), \nud^{-1}(\theta_k))\rangle\\&=\langle Z(\theta_k),\hat\gf(\theta_j, \theta_k)\rangle
\end{split}
\end{equation}
such that, with~\eqref{eq:PhaseDyn}, we can truncate at first order and the phase of oscillator~$k$ evolves approximately according to
\begin{equation}\label{eq:PhaseOsc}
\dot \theta_k  = \omega + \eps\sum_{j=1}^N \gf^\PR(\theta_j, \theta_k).
\end{equation}
Note that this system is a network dynamical system of the form~\eqref{eq:Network} in its own right on $\cM=\bbT$ with constant $\ff = \omega+\gf^\PR(0,0)$. Consequently, and in slight abuse of notation, we will write~$\gf^\PR$ just as~$\gf$ if it is clear from the context whether~$\gf$ is a function on~$\bbR^{2d}$ and, if not, whether it is induced by a phase reduction.

\subsubsection{Dead zones in weakly coupled phase oscillator networks}
\label{sec:PhaseRedDZ}

The phase reduction yields conditions for the phase reduced network~\eqref{eq:PhaseOsc} to have dead zones. The first result follows directly from the definition of~$\gf^\PR$:

\begin{lemm}\label{lem:DZphasered}
Suppose that~$\eps>0$ is sufficiently small and that the coupling function~$\gf(x_j,x_k)$ of the coupled oscillator network~\eqref{eq:Network} with limit cycle~$\Gamma$ has a dead zone~$U$ such that $\Gamma^2\cap U\neq \emptyset$. Then~$\gf^\PR(\theta_j,\theta_k)$ for the phase dynamics~\eqref{eq:PhaseOsc} has a dead zone for the set of phases in~$\bbT^2$ that has a nonempty intersection with~$\Gamma^2\cap U$.
\end{lemm}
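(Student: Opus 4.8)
The strategy is to use that the induced phase interaction function~\eqref{eq:gPR} evaluates the original coupling function~$\gf$ only along the limit cycle~$\Gamma$, so that the vanishing of~$\gf$ on the dead zone~$U$ transfers directly to~$\gf^\PR$ on the corresponding region of phase space. Concretely, I would first use that the asymptotic phase~$\nud$ restricts to a smooth bijection---hence a homeomorphism, $\Gamma$ being compact---from~$\Gamma$ onto~$\bbT$, so that $\Phi := \nud^{-1}\times\nud^{-1}$ is a homeomorphism from~$\bbT^2$ onto~$\Gamma^2$. Since~$U$ is open in~$\cM^2$, the set $\Gamma^2\cap U$ is open in~$\Gamma^2$ and nonempty by hypothesis, so $V := \Phi^{-1}(\Gamma^2\cap U)$ is a nonempty open subset of~$\bbT^2$; by construction $V$ is precisely the set of phase pairs $(\theta_j,\theta_k)$ with $(\nud^{-1}(\theta_j),\nud^{-1}(\theta_k))\in U$, that is, the ``set of phases in~$\bbT^2$ that has a nonempty intersection with~$\Gamma^2\cap U$'' in the statement.

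For $(\theta_j,\theta_k)\in V$ we then have $(\nud^{-1}(\theta_j),\nud^{-1}(\theta_k))\in U$, and since $\gf$ vanishes identically on~$U$ (Definition~\ref{def:DZ}), $\hat\gf(\theta_j,\theta_k)=\gf(\nud^{-1}(\theta_j),\nud^{-1}(\theta_k))=0$; inserting this into~\eqref{eq:gPR} yields $\gf^\PR(\theta_j,\theta_k)=\langle Z(\theta_k),0\rangle=0$. Hence $V\subseteq\Ns(\gf^\PR)$, and every connected component of~$V$ is a connected open subset of~$\bbT^2$ on which~$\gf^\PR$ vanishes, so it is contained in a maximal such set, which is---by Definition~\ref{def:DZ} applied to the reduced phase network~\eqref{eq:PhaseOsc} on $\cM=\bbT$---a dead zone of~$\gf^\PR$ meeting~$V$, as claimed. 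The smallness of~$\eps$ is not used in this vanishing argument; it only guarantees that~\eqref{eq:PhaseOsc} is a faithful first-order reduction of~\eqref{eq:Network} near the persisting invariant torus, so that ``dead zone for the phase dynamics'' is a meaningful statement.

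I do not expect a genuine obstacle here: once the identification $\Gamma^2\cong\bbT^2$ via~$\nud$ is in place the argument is purely a matter of unwinding definitions. The one point that deserves emphasis is why the hypothesis $\Gamma^2\cap U\neq\emptyset$ is needed---because~$\gf^\PR$ only ever samples~$\gf$ along~$\Gamma$, a dead zone of~$\gf$ disjoint from~$\Gamma^2$ is invisible to the phase reduction and need not produce a dead zone for~$\gf^\PR$.
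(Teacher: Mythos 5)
Your proposal is correct and follows essentially the same route as the paper's proof: evaluate $\gf^\PR(\theta_j,\theta_k)=\langle Z(\theta_k),\hat\gf(\theta_j,\theta_k)\rangle$ and note that $\hat\gf(\theta_j,\theta_k)=\gf(\nud^{-1}(\theta_j),\nud^{-1}(\theta_k))=0$ whenever the corresponding point lies in $\Gamma^2\cap U$. The extra topological detail you supply (that $\nud$ identifies $\Gamma^2$ with $\bbT^2$ so the relevant phase set is open and nonempty, hence contained in a dead zone of $\gf^\PR$) is a careful unwinding of the same one-line argument the paper gives.
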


\begin{proof}
Note that $\gf^\PR(\theta_j,\theta_k)=\langle Z(\theta_k),\hat\gf(\theta_j, \theta_k)\rangle$ and so if $(\nud^{-1}(\theta_j),\nud^{-1}(\theta_k))\in \Gamma^2\cap U$ then $\hat\gf(\theta_j,\theta_k)=\gf(\nud^{-1}(\theta_j),\nud^{-1}(\theta_k))=0$ and hence $\gf^\PR(\theta_j, \theta_k)=0$.
\end{proof}

The phase dynamics~\eqref{eq:PhaseDyn} also gives a further geometric condition for the emergence of a dead zone. Let $T_\vartheta := \Tg_{\nu^{-1}(\vartheta)}\Theta_{\vartheta}$ denote the tangent space of the isochron at phase~$\vartheta$. Since the phase response curve is the normal vector for the isochron, we have $\vp{v}{Z(\nud)} = 0$ for any $v\in T_\vartheta$. 

The next result for oscillator networks~\eqref{eq:Network} applies where the input acts in a fixed direction. This assumption is valid in many applications where the input acts on a particular component. It is straightforward to give a similar condition for arbitrary network coupling.

\begin{prop}\label{DZweak}
Consider the oscillator network~\eqref{eq:Network} for a coupling function $\gf(x_j,x_k) = \tilde\gf(x_j,x_k)v$ with a scalar function $\tilde\gf:\bbR^d\times\bbR^d\to\bbR$ and $v\in\bbR^d$ fixed.
If there is an interval $A\subset\bbT$ such that $v\in T_\vartheta$ for all $\vartheta\in A$ then the phase reduced system~\eqref{eq:PhaseOsc} with phase interaction function~$\gf^\PR(\theta_j,\theta_k)$ has a dead zone $U\in\bbT^2$ with $\bbT\times A\subset U$.
\end{prop}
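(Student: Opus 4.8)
The plan is to compute the induced phase interaction function explicitly for the special coupling $\gf=\tilde\gf\,v$ and then read off the dead zone directly from the orthogonality between the phase response curve and the isochron tangent spaces. First I would substitute $\hat\gf(\theta_j,\theta_k)=\tilde\gf\big(\nud^{-1}(\theta_j),\nud^{-1}(\theta_k)\big)\,v$ into the definition~\eqref{eq:gPR} of~$\gf^\PR$ and use bilinearity of the scalar product to obtain
\[
\gf^\PR(\theta_j,\theta_k)=\tilde\gf\big(\nud^{-1}(\theta_j),\nud^{-1}(\theta_k)\big)\,\vp{Z(\theta_k)}{v}.
\]
Thus the only $\theta_k$-dependent factor that can force $\gf^\PR$ to vanish identically on a product set is the scalar $\vp{Z(\theta_k)}{v}$.

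Next I would invoke the geometric fact recalled just before the statement: $Z(\vartheta)$ is a normal vector to the isochron $\Theta_\vartheta$ at $\nud^{-1}(\vartheta)$, so $\vp{Z(\vartheta)}{w}=0$ for every $w\in T_\vartheta$. By hypothesis $v\in T_\vartheta$ for all $\vartheta\in A$, hence $\vp{Z(\vartheta)}{v}=0$ for all $\vartheta\in A$. Combining this with the displayed formula gives $\gf^\PR(\theta_j,\theta_k)=0$ for every $(\theta_j,\theta_k)\in\bbT\times A$, i.e.\ $\bbT\times A\subseteq\Ns(\gf^\PR)$.

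It then remains to package this into an actual dead zone in the sense of Definition~\ref{def:DZ}. Assuming $A$ is a nondegenerate interval, $\bbT\times A$ (more precisely its interior $\bbT\times\mathrm{int}(A)$) is a nonempty connected open subset of $\bbT^2$ on which $\gf^\PR$ vanishes; taking the connected component of the interior of $\Ns(\gf^\PR)$ containing it yields a maximal connected open set $U$ with $\gf^\PR(U)=0$ and $\bbT\times A\subseteq U$ (connected components of open subsets of a manifold are open, so $U$ is indeed a dead zone). As in Lemma~\ref{lem:DZphasered}, the smallness of~$\eps$ plays no role in this argument; it is only needed for~\eqref{eq:PhaseOsc} to be a valid reduction of~\eqref{eq:Network}.

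I do not expect a genuine obstacle: the proposition is essentially a one-line consequence of the factorization $\gf^\PR=\tilde\gf\cdot\vp{Z}{v}$ together with $Z(\vartheta)\perp T_\vartheta$. The only point requiring a little care is the last step --- checking that $\bbT\times A$ sits inside a bona fide (maximal, connected, open) dead zone rather than merely inside the zero set, and in particular being mindful about the endpoints of~$A$, which need not lie in the open set~$U$; the statement is cleanest when $A$ is read as an open interval.
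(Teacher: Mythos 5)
Your argument is exactly the paper's: factor the reduced coupling as $\gf^\PR(\theta_j,\theta_k)=\tilde\gf\bigl(\nud^{-1}(\theta_j),\nud^{-1}(\theta_k)\bigr)\langle Z(\theta_k),v\rangle$ and use that $Z(\vartheta)\perp T_\vartheta$ forces $\langle Z(\theta_k),v\rangle=0$ for $\theta_k\in A$. Your extra care in packaging $\bbT\times A$ into a maximal connected open set (and noting the endpoint/openness issue for $A$) is a harmless refinement the paper leaves implicit, not a different route.
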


\begin{proof}
For the assumed coupling we have
$\gf^\PR({\theta_j, \theta_k}) = \tilde\gf(\nud^{-1}(\theta_j), \nud^{-1}(\theta_k))\langle Z(\theta_k),v\rangle$ for the reduced system~\eqref{eq:PhaseOsc}.
Thus, $\langle Z(\theta_k),v\rangle = 0$ for $\theta_k\in A$ implies $\gf^\PR({\theta_j, \theta_k}) = 0$.
\end{proof}

\begin{rem}\label{rem:DZGeometry}
Note that this proposition gives sufficient conditions for a dead zone in the phase dynamics without having a dead zone in the original system, i.e., $\gf\neq 0$. In other words, Proposition~\ref{DZweak} says that if the network input is parallel to the isochrons at the limit cycle for an interval of phases, then this induces a dead zone. This is a \emph{geometry induced dead zone} for the phase dynamics. 
\end{rem}

\subsection{Dead zones for averaged identical phase oscillators}

If the oscillator forcing~$\If$ in~\eqref{eq:PhaseDyn} is periodic with approximately the same period as the oscillation itself, one can simplify the dynamics further through an averaging approximation; cf.~Reference~\onlinecite{Sanders2007} for general theory. This is particularly applicable in the case that the oscillator is weakly coupled to other identical oscillators through some network \cite{Ashwin1992}. The averaged system does not describe the oscillation in itself but slow variations of the oscillations relative to one another. Averaging leads to a diffusively coupled phase oscillator system~\eqref{eq:NetworkOscAvg} as we explain below. We give a brief overview of the averaging approximation before outlining sufficient conditions for dead zones to arise in the averaged system; the latter are the dead zones analyzed in Ref~\onlinecite{AshBicPoi2019}.

Averaging the system~\eqref{eq:PhaseOsc} over one period yields a phase oscillator network~\eqref{eq:NetworkOscAvg} with coupling through phase differences~\cite{Swift1992,AshCooNic2016}. More precisely, the averaged phase evolution, valid for small~$\eps$ and timescales~$t=O(1/\eps)$, is given by
\begin{equation}\label{eq:net-osci}
    \dot \theta_k = \omega + \eps\sum_{j=1}^N \hf(\theta_j-\theta_k)
\end{equation}
with coupling function
\begin{align}\label{eq:funH}
\begin{split}
    \hf(\vartheta)&=\dfrac{1}{2\pi}\int_0^{2\pi}\gf^\PR({s, \vartheta+s})\,\ud s\\
    &=\dfrac{1}{2\pi}\int_0^{2\pi}\left\langle Z(\vartheta+s),\hat\gf(s,\vartheta+s)\right\rangle\,\ud s
\end{split}
\end{align}    
Using linearity, we can also write
\begin{align}\label{eq:hComp}
\begin{split}
    \hf(\vartheta)&= \dfrac{1}{2\pi}\sum_{\ell=1}^d \int_0^{2\pi} \hat\hf_{\ell}(\vartheta,\vartheta+s)\,\ud s\\
     &\text{with~~} \hat\hf_{\ell}(\psi,\phi)= Z_{\ell}(\phi)\hat\gf_{\ell}(\psi,\phi),
     \end{split}
\end{align}
where the maps~$\hat\gf_{\ell}$ are the components of the interaction function~$\hat\gf(\theta_j,\theta_k)$.

In general we cannot expect a result analogous to Lemma~\ref{lem:DZphasered} to hold for the averaged dynamics~\eqref{eq:net-osci}: Even if there is an open interval on which either factor of the integrand in~\eqref{eq:funH} vanishes, the integral---and thus the resulting averaged coupling function---does not necessarily vanish. The following proposition gives a sufficient condition for there to be a dead zone for~$\hf$ and follows directly from consideration of~\eqref{eq:hComp}.

\begin{prop}\label{prop:res1}
Consider the oscillator network~\eqref{eq:net-osci} and suppose that $A\subset\bbT$ is an interval. If the set of phases in $\bbT^2$ where the components differ by elements in $A$ is contained in the zero set of all~$\hat\hf_{\ell}$, that is, if
\[
\set{(s,s-\vartheta)\in\bbT^2}{\vartheta\in A} \subset \bigcap_{\ell=1}^d \Ns\big(\hat\hf_{\ell}\big)
\]
then~$A\subset\DZ(\hf)$.
\end{prop}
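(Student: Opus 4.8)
The plan is to read the statement off directly from the integral representation~\eqref{eq:hComp}. First I would fix an arbitrary $\vartheta\in A$ and expand $\hf(\vartheta)$ using~\eqref{eq:hComp}: it is a finite sum over $\ell=1,\dotsc,d$ of averages of the form $\frac{1}{2\pi}\int_0^{2\pi}\hat\hf_\ell\,\ud s$, where $\hat\hf_\ell(\psi,\phi)=Z_\ell(\phi)\hat\gf_\ell(\psi,\phi)$ and the pair of arguments fed to~$\hat\hf_\ell$ depends on the integration variable~$s$. The one thing to check carefully is that, as $s$ ranges over~$\bbT$, this pair of arguments sweeps out exactly the set $\set{(s,s-\vartheta)\in\bbT^2}{s\in\bbT}$ of phase pairs whose two components differ by~$\vartheta$ --- that is, the slice at~$\vartheta$ of the family appearing in the hypothesis.

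Given that, the rest is immediate. Since $\vartheta\in A$, the hypothesis says this slice lies inside $\Ns(\hat\hf_\ell)$ for every~$\ell$, so each integrand $\hat\hf_\ell$ vanishes identically in~$s$; hence every term of~\eqref{eq:hComp} is zero and $\hf(\vartheta)=0$. As $\vartheta\in A$ was arbitrary, $A\subset\Ns(\hf)$. To conclude $A\subset\DZ(\hf)$ I would note that $A$ is an open interval, hence a connected open subset of~$\bbT$ on which~$\hf$ vanishes; the union of all connected open sets containing~$A$ on which~$\hf$ vanishes is then a maximal such set, i.e.\ a dead zone of~$\hf$ in the sense of Definition~\ref{def:DZ}, and it contains~$A$.

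I do not expect any real obstacle: the argument is essentially a change of variables inside the averaging integral together with the linearity that lets~$\hf$ split into the scalar components~$\hat\hf_\ell$. The only place demanding attention is matching the bookkeeping of the shifted arguments of~$\hat\hf_\ell$ in~\eqref{eq:hComp} with the set $\set{(s,s-\vartheta)}{\vartheta\in A}$ stated in the hypothesis, so that a componentwise (in~$\ell$) vanishing assumption genuinely forces the whole averaged integrand, and hence~$\hf$, to vanish on~$A$.
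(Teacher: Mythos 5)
Your proof is correct and is essentially the paper's own argument: the paper simply states that the proposition ``follows directly from consideration of~\eqref{eq:hComp}'', and your write-up spells out exactly that term-by-term vanishing of the averaged integrand, plus the (routine) step that an open interval in the zero set of~$\hf$ lies in a maximal connected open zero set and hence in~$\DZ(\hf)$. The only point to note is the bookkeeping you yourself flag: as written, \eqref{eq:funH}--\eqref{eq:hComp} evaluate $\hat\hf_\ell$ at the pairs $(s,\vartheta+s)$, which agrees with the hypothesis set of pairs $(s,s-\vartheta)$ only up to a sign of~$\vartheta$ (a slip in the paper's own conventions, compare \eqref{eq:funH} with \eqref{eq:net-osci}), so this is a notational inconsistency in the source rather than a gap in your argument.
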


\begin{rem}
In many models of relevance to applications, the network interactions act not only in a constant direction---as in Proposition~\ref{DZweak}---but also this direction is perpendicular to one of the coordinate axes. For example, for interacting neural oscillators the coupling is often through a single variable, namely the membrane voltage; we will discuss further explicit examples below. In this case, the condition in Proposition~\ref{prop:res1} simplifies as $\Ns(\hat\hf_{\ell})=\bbT^2$ for all but one~$\ell$.
\end{rem}

\begin{rem}
The decomposition of~$\hf$ in~\eqref{eq:hComp} implies that both~$Z_{\ell}$ and $\gf_{\ell}$ have a role in determining $\Ns(\hat\hf_{\ell})$ and thus~$\DZ(\hf)$. If we define 
\[\pi_j:\bbT^2\rightarrow \bbT\]
to be projection onto the $j$th component, $j\in\sset{1,2}$, note that $\Ns(\hat\hf_{\ell})=\Ns(Z_{\ell}\circ \pi_2)\cup \Ns(\hat \gf_{\ell})$. 
\end{rem}

\newcommand{\whZ}{\widehat{Z}}

\subsection{Overlapping arcs and dead zones for separable coupling functions}

Now assume that the interaction function~$\gf$ of the system~\eqref{eq:Network} is separable in the sense of Section~\ref{sec:sep}, i.e.,  $\hat\gf(\theta_j,\theta_k)=\hat\gf^\iin(\theta_j)\odot\hat\gf^\res(\theta_k)$. For ease of notation, assume first that the oscillator coupling only acts in one variable. Omitting the relevant index, the coupling function~\eqref{eq:funH} can be written as
\begin{align}\label{eq:funHsep}
\begin{split}
	\hf(\vartheta)&=\dfrac{1}{2\pi}\int_0^{2\pi} Z(\vartheta+s)\hat\gf^\res(\vartheta+s)\hat\gf^\iin(s)\,\ud s\\&=\dfrac{1}{2\pi}\int_0^{2\pi} \whZ(\vartheta+s)\hat\gf^\iin(s)\,\ud s
	\end{split}
\end{align}
where the scalar function~$\gf^\iin$ describes the input and 
\[\whZ(\vartheta) := Z(\vartheta)\gf^\res(\vartheta)\]
is the combined phase response. Our next result is a  geometric condition that guarantees a nontrivial dead zone of~$\hf$. We define $\rho_{\vartheta}:\bbT\rightarrow \bbT$ to be the translation $\rho_{\vartheta}(s)=(s+\vartheta)$. 

\begin{prop}
	Consider system~\eqref{eq:net-osci} with coupling function~\eqref{eq:funHsep}. Suppose that 
	$\Ns(\whZ)$ and $\Ns(\hat\gf^\iin)$ contain intervals of length $L_1>0$ and $L_2>0$ respectively. If $L_1+L_2>2\pi$ then there is a nontrivial dead zone for~$\hf$ with length at least $L_1+L_2-2\pi>0$.
	\label{prop:geom}
\end{prop}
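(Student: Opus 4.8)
The plan is to characterize precisely when the integrand in~\eqref{eq:funHsep} vanishes identically, and then to convert the arithmetic hypothesis $L_1+L_2>2\pi$ into a statement about two arcs covering the circle.

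First, note that the integrand $s\mapsto\whZ(\vartheta+s)\hat\gf^\iin(s)$ in~\eqref{eq:funHsep} is continuous, so $\hf(\vartheta)=0$ as soon as this integrand vanishes for \emph{every} $s\in\bbT$. Fix closed intervals $I_1\subset\Ns(\whZ)$ and $I_2\subset\Ns(\hat\gf^\iin)$ of lengths $L_1$ and $L_2$ respectively; these exist because the zero sets are closed and contain intervals of those lengths. For a fixed $s$, the product $\whZ(\vartheta+s)\hat\gf^\iin(s)$ vanishes whenever $s\in I_2$ or $\vartheta+s\in I_1$, i.e.\ $s\in\rho_{-\vartheta}(I_1)$. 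Hence the integrand vanishes identically---and so $\hf(\vartheta)=0$---whenever $I_2\cup\rho_{-\vartheta}(I_1)=\bbT$.

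Second, I would establish the elementary fact that two closed arcs of lengths $L_1,L_2$ with $L_1+L_2>2\pi$ cover $\bbT$ for a set of relative positions which is itself an arc of length $L_1+L_2-2\pi$. Concretely, after a rotation take $I_2=[0,L_2]$, so $\bbT\setminus I_2$ is the open arc with endpoints $L_2$ and $2\pi$; since $\rho_{-\vartheta}(I_1)$ is closed, it covers $\bbT\setminus I_2$ exactly when it contains the closed arc $[L_2,2\pi]$ of length $2\pi-L_2$. Writing $\rho_{-\vartheta}(I_1)=[\mu,\mu+L_1]$ in a suitable lift to $\bbR$, this containment holds precisely for $\mu\in[2\pi-L_1,\,L_2]$, an interval of length $L_2-(2\pi-L_1)=L_1+L_2-2\pi>0$. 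Translating back, $\vartheta$ ranges over an interval $A_0\subset\bbT$ of length $L_1+L_2-2\pi$ on which $\hf$ vanishes identically. Since a dead zone of~$\hf$ is a maximal connected open set on which $\hf=0$, the interior of $A_0$ lies in some dead zone, which therefore has length at least $L_1+L_2-2\pi$, giving the claim.

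The step requiring the most care is the bookkeeping on $\bbT$: one must keep track of which arcs are open or closed (the zero sets are closed, so the extracted intervals may be taken closed, and a closed arc covering an open arc automatically covers its closure), choose consistent lifts to $\bbR$ when intersecting the moving arc $\rho_{-\vartheta}(I_1)$ with the fixed arc, and check that the admissible set of $\vartheta$ is connected with exactly the stated length. None of this is deep, but it is precisely the endpoint accounting where a careless argument would fail to recover the sharp constant $L_1+L_2-2\pi$.
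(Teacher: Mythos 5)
Your proof is correct and takes essentially the same approach as the paper's: the integrand of~\eqref{eq:funHsep} vanishes identically precisely when $\Ns(\hat\gf^\iin)$ together with the translated zero set $\Ns(\whZ\circ\rho_{\vartheta})$ covers~$\bbT$, and the condition $L_1+L_2>2\pi$ guarantees this for an interval of~$\vartheta$ of length $L_1+L_2-2\pi$. You merely make explicit the arc and endpoint bookkeeping that the paper's very terse proof leaves implicit.
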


\begin{proof}
If $(\alpha,\beta)\in\Ns(\whZ)$ then $(\alpha-\vartheta,\beta-\vartheta)\subset\Ns(\whZ\circ \rho_{\vartheta})$ by definition. This, together with the assumption on~$L_1$ and~$L_2$, implies that there is an interval $C\subset\bbT$ of length $L_1+L_2-2\pi>0$ such that $\Ns(\whZ\circ \rho_{\vartheta}) \cup \Ns(\hat\gf^\iin)=\bbT$ for any $\vartheta\in C$. Now for any $\vartheta\in C$ the integrand in~\eqref{eq:funHsep} vanishes. Thus, $\vartheta\in \DZ(\hf)$, which proves the assertion.
\end{proof}

To state a more general result of this nature we first introduce some notation for overlapping arcs in $\bbT=\bbR/2\pi\bbZ$. We write $\Pi:\bbR\rightarrow \bbT$ for $\Pi(x)=x~(\bmod ~2\pi)$, the covering map. Given any $\alpha\neq \beta$ in $\bbT$, we say $(\alpha,\beta)\subset \bbT$ is an {\em arc}\footnote{One can similarly define arcs $[\alpha,\beta]$, $[\alpha,\beta)$ and $(\alpha,\beta]$.} with \emph{first extremity}~$\alpha$ and \emph{last extremity}~$\beta$ if for any $\alpha'\in\Pi^{-1}(\alpha)$, we can choose $\beta'\in\Pi^{-1}(\beta)$ with $\alpha'<\beta'<\alpha'+2\pi$ and such that
$$
(\alpha,\beta)=\Pi\bigl((\alpha',\beta')\bigr).
$$
We say that arc~$C_1$ {\em overlaps with} arc~$C_2$ if there exists an arc $C=(\alpha,\beta)$ with $\alpha<\beta$ such that $C\subset C_1 \cap C_2$. We say~$C_1$ overlaps with~$C_2$ at \emph{first (respectively last) extremity} of~$C_1$ if the arc $C \subset C_1 \cap C_2$ contains the first (respectively last) extremity of~$C_1$.
Suppose $S\subset \bbT$. We say $(\alpha,\beta)\subset S$ is a \emph{maximal arc} of~$S$ if $(\alpha-\chi,\beta+\chi)\not\subset S$ for all $\chi>0$. As a generalization of Proposition~\ref{prop:geom} we have the following result. 

\begin{prop}
	\label{prop:DZweakly}
Consider the system~\eqref{eq:net-osci} and assume that the phase interaction function is separable as above so that
\begin{equation}
\hat\hf_{\ell}(\phi,\psi)=\whZ_{\ell}(\phi)\hat\gf^\iin_{\ell}(\psi)
\label{eq:seph}
\end{equation}
for all~$\ell\in\sset{1,\dotsc,d}$.
Suppose there exists an $\alpha>0$ such that for all~$\ell\in\sset{1,\dotsc,d}$ there are arcs $C_{\ell,1}\subset \Ns(\whZ_{\ell}\circ\rho_\alpha)$ and $C_{\ell,2}\subset \Ns(\hat\gf^\iin_{\ell})$ such that~$C_{\ell,1}$ and~$C_{\ell,2}$ overlap at last extremity of~$C_{\ell,1}$. Then the dead zone~$\DZ(\hf)$ for system~\eqref{eq:net-osci} contains an arc $(\alpha,\beta)$ for some $\beta\neq\alpha$.
\end{prop}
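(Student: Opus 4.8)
The plan is to exhibit an arc with first extremity $\alpha$ on which $\hf$ vanishes identically; by Definition~\ref{def:DZ} this arc then lies in $\DZ(\hf)$ (or, strictly, its interior lies in a dead zone, which is the sense in which we read the conclusion). Combining the separable form~\eqref{eq:seph} with~\eqref{eq:hComp} gives
\[
\hf(\vartheta)=\frac{1}{2\pi}\sum_{\ell=1}^{d}\int_{0}^{2\pi}\whZ_{\ell}(\vartheta+s)\,\hat\gf^{\iin}_{\ell}(s)\,\ud s .
\]
Since $\whZ_{\ell}(\vartheta+s)=0$ exactly for $s\in\Ns(\whZ_{\ell})-\vartheta=\Ns(\whZ_{\ell}\circ\rho_{\vartheta})$ and $\hat\gf^{\iin}_{\ell}(s)=0$ exactly for $s\in\Ns(\hat\gf^{\iin}_{\ell})$, the $\ell$-th integrand vanishes identically in $s$, and hence $\hf(\vartheta)=0$, whenever the covering condition
\[
\Ns(\whZ_{\ell}\circ\rho_{\vartheta})\cup\Ns(\hat\gf^{\iin}_{\ell})=\bbT
\]
holds for every~$\ell$; by continuity of $Z,\gf^{\res},\gf^{\iin}$ these zero sets are closed, so the arcs $C_{\ell,1}\subset\Ns(\whZ_{\ell}\circ\rho_{\alpha})$ and $C_{\ell,2}\subset\Ns(\hat\gf^{\iin}_{\ell})$ supplied by the hypothesis may be taken closed and no endpoints are lost. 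Writing $D_{\ell}:=\set{\vartheta\in\bbT}{\Ns(\whZ_{\ell}\circ\rho_{\vartheta})\cup\Ns(\hat\gf^{\iin}_{\ell})=\bbT}$, we then have $\bigcap_{\ell=1}^{d}D_{\ell}\subset\Ns(\hf)$, so it suffices to show that $\bigcap_{\ell}D_{\ell}$ contains an arc $[\alpha,\beta]$ with $\beta>\alpha$.

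I would treat each $D_{\ell}$ in the ``rotation picture'': as $\vartheta$ increases, the set $\Ns(\whZ_{\ell}\circ\rho_{\vartheta})=\Ns(\whZ_{\ell})-\vartheta$ rotates rigidly in the negative direction while $\Ns(\hat\gf^{\iin}_{\ell})$ stays put, and the covering condition fails exactly when some complementary arc of $\Ns(\whZ_{\ell}\circ\rho_{\vartheta})$ overlaps a complementary arc of $\Ns(\hat\gf^{\iin}_{\ell})$. The hypothesis provides, at $\vartheta=\alpha$, maximal arcs $C_{\ell,1},C_{\ell,2}$ of the two zero sets that cover $\bbT$, and it places the overlap at the last extremity $q_{\ell}$ of $C_{\ell,1}$: this amounts to a strictly positive overlap $\delta_{\ell}>0$ there, i.e.\ $[q_{\ell}-\delta_{\ell},q_{\ell}]\subset C_{\ell,2}$. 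I would then argue that this junction is the binding one: the negative rotation moves $q_{\ell}$ towards the first extremity of $C_{\ell,2}$, so the overlap there shrinks at unit rate and is exhausted only at $\vartheta=\alpha+\delta_{\ell}$, whereas at every other junction of the configuration the overlap is non-decreasing in $\vartheta$ (a complementary arc of $\Ns(\whZ_{\ell}\circ\rho_{\vartheta})$ sitting inside a maximal arc of $\Ns(\hat\gf^{\iin}_{\ell})$ at $\vartheta=\alpha$ can only escape it through that arc's first extremity, which the overlap-at-last-extremity hypothesis rules out for $\vartheta-\alpha$ small). Hence the covering condition persists on $[\alpha,\alpha+\delta_{\ell}]$, i.e.\ $[\alpha,\alpha+\delta_{\ell}]\subset D_{\ell}$. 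This is the exact generalization of the elementary interval computation behind Proposition~\ref{prop:geom}, where the admissible set of shifts is the interval of length $L_{1}+L_{2}-2\pi$.

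Finally, set $\beta:=\alpha+\min_{1\le\ell\le d}\delta_{\ell}>\alpha$; then $[\alpha,\beta]\subset\bigcap_{\ell}D_{\ell}\subset\Ns(\hf)$, so $\hf$ vanishes on the arc $[\alpha,\beta]$ and this arc lies in $\DZ(\hf)$, as claimed. The main obstacle is the middle step: the hypothesis pins down only one junction, so the crux is to verify that this junction is genuinely the one constraining how far $\vartheta$ may move, and that no other junction between the (possibly several) maximal arcs of $\Ns(\whZ_{\ell}\circ\rho_{\vartheta})$ and of $\Ns(\hat\gf^{\iin}_{\ell})$ can open a gap for small $\vartheta-\alpha>0$. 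Carrying this out cleanly requires careful bookkeeping of first and last extremities of arcs under the covering map~$\Pi$ and the rotation~$\rho_{\vartheta}$, expressed in the overlapping-arc vocabulary introduced above; the continuity (hence closedness of zero sets) of $\whZ_{\ell}$ and $\hat\gf^{\iin}_{\ell}$ is what ensures that the integrals vanish exactly rather than merely off a nowhere dense set.
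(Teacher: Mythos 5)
Your argument is correct and follows essentially the same route as the paper's proof: you use the separable decomposition~\eqref{eq:inth}, note that $\hf(\vartheta)=0$ whenever $\Ns(\whZ_{\ell}\circ\rho_{\vartheta})\cup\Ns(\hat\gf^\iin_{\ell})=\bbT$ for every~$\ell$, and then argue that this covering persists for an arc of $\vartheta$ beyond $\alpha$ because the leftward shift of $\Ns(\whZ_{\ell}\circ\rho_{\vartheta})$ only erodes the overlap at the last extremity of $C_{\ell,1}$, which is positive by hypothesis. You merely make explicit (rotation direction, the overlap lengths $\delta_{\ell}$) what the paper's one-line persistence claim leaves implicit, sharing the same reading of the hypothesis that the two arcs cover $\bbT$ at $\vartheta=\alpha$.
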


\begin{proof}
Using~\eqref{eq:seph} in~\eqref{eq:hComp} we can write $\hf(\vartheta)$ as
\begin{equation}
\hf(\vartheta)=\dfrac{1}{2\pi}\sum_{\ell=1}^d \int_0^{2\pi}(\whZ_{\ell}\circ\rho_{\vartheta})(s)\hat\gf^\iin_{\ell}(s) \,\ud s.
\label{eq:inth}
\end{equation}
By assumption, for $\vartheta=\alpha$ and all $1<\ell<d$ we have $\Ns(\whZ_{\ell}\circ \rho_\vartheta)\cup \Ns(\hat\gf^\iin_{\ell})=\bbT$. Moreover, as these intervals overlap on a non-empty interval there must be a non-empty arc $(\alpha,\beta)$ such that $\Ns(\whZ_{\ell}\circ\rho_\vartheta)\cup\Ns(\hat\gf^\iin_{\ell})=\bbT$ for all $\vartheta\in(\alpha,\beta)$ and $1\leq \ell\leq d$. Hence we have $(\alpha,\beta)\subset\DZ(\hf)$.
\end{proof}

There are other possible reasons why a dead zone may appear in~$\hf$ even if not present in~$\gf$ (or~$\gf^\PR$), in the case where the integral in~\eqref{eq:inth} cancels out for a range of~$\vartheta$. Effectively this can be thought of as the translates of~$Z_{\ell}$ being orthogonal functions to~$\gf_{\ell}$. We expect this is less likely to arise in applications in that it requires stipulations on global properties of these functions.

\section{Multiscale oscillators and dead zones}
\label{sec:multiscale}

In various applications---especially neuroscience~\cite{AshCooNic2016}---one is concerned with the behavior of coupled oscillators whose intrinsic dynamics~\eqref{eq:SingleOsc} have multiple timescales. In the following we consider the emergence of dead zones for such coupled multiscale oscillator networks; for the sake of clarity we consider two slow-fast oscillators, but note that there are obvious generalizations to $N \geq 2$ oscillators with more than two timescales. 

We consider the following specific example of~\eqref{eq:Network}:
\begin{equation}\label{eq:slow-fast-coup}
\begin{aligned}
\mu \dot v_k &= f(v_k,w_k)+\eps I(v_k,w_k,v_j,w_j),\\
\dot w_k &= g(v_k,w_k)
\end{aligned}
\end{equation}
for $k=1,2$, $j=3-k$
such that the state of oscillator~$k$ is given by $x_k = (v_k, w_k)\in\Er^2$ with fast variable~$v_k$ and slow variable~$w_k$. We assume the intrinsic dynamics are governed by sufficiently smooth $\ff = (f,g)$ and the ratio of intrinsic timescales $\mu \ll 1$, and $\gf = (I, 0)$ determines the interactions between the oscillators.

We recall some standard notions for such oscillators (see Reference~\onlinecite{Izhikevich2000} for more details); since we deal with a single oscillator, we omit the oscillator index~$k$. For the uncoupled oscillator $\eps=0$ the {\em singularly perturbed system}
\begin{equation}
\label{eq:slow-fast-0}
\begin{split}
	\mu\dot v &= f(v,w)\\
	\dot w &= g(v,w)
	\end{split}
\end{equation}
has slow-fast dynamics with {\em critical manifold} which is a one dimensional manifold
\[
\Ns(\ff)=\set{(v,w) \in \bbR^2}{f(v,w)=0}.
\]
The \emph{reduced system} is defined in the singular limit $\mu=0$ in terms of the differential algebraic equation
\begin{equation}
\label{eq:reduced}
\begin{aligned}
0 &= f(v,w)\\
\dot{w}&= g(v,w).
\end{aligned}
\end{equation}
In general, $\Ns(\ff)$ need not be a graph over~$w$: There may be a number of branches of solutions to $f(\xi(w),w)=0$, each parametrized continuously by~$w$. We assign them a stability according to the stability of~$\xi(w)$ as an equilibrium of the fast (layer) equation
\begin{equation}
\label{eq:layer}
\begin{aligned}
v' &= f(v,w)\\
w'&= 0,
\end{aligned}
\end{equation}
where $v':=\frac{\ud}{\ud s}v$ denotes the derivative with respect to slow time $s=\mu t$.

A singular solution of the reduced system~\eqref{eq:reduced} is a piecewise continuous solution that is continuous on any stable branch of solutions of~\eqref{eq:layer} \cite{Kuehn2015}. If the solution arrives at  a saddle-node (fold) at the end of such a branch, the layer equation defines a unique \emph{drop point} to a new stable branch.

We say a singular solution is a {\em simple relaxation oscillation with~$Q$ slow branches} (see, e.g., Reference~\onlinecite[Definition~5.2.4]{Kuehn2015}) if it is a periodic solution~$\gamma_0(t)$ with period~$\tau_0$ that consists of~$q$ alternating fast and slow segments, the jumps occur at generic fold points and drop points are normally hyperbolic. This implies we can partition $0=s_0<s_1<\cdots<s_Q=\tau_0$ and there are solutions~$u_q(t)$ of~\eqref{eq:reduced} such that $\gamma_0(t)=u_q(t)$ if $t\in(s_{q-1},s_q)$, $q\in\sset{2, \dotsc, Q}$. Standard results on relaxation oscillations (see Reference~\onlinecite{Izhikevich2000} or Reference~\onlinecite[Theorem 5.5.3]{Kuehn2015}) mean that for~$\mu$ close enough to zero there is a stable limit cycle~$\gamma_\mu(t)$ of~\eqref{eq:slow-fast-0} whose trajectory limits to~$\gamma_0(t)$ and such that the period~$\tau_{\mu}$ limits to~$\tau_0$ as $\mu\rightarrow 0$. Moreover, the durations $\tau_{\mu,q}$ spent close to the slow segment $u_q(t)$ tend to $\tau_{0,q}:=s_q-s_{q-1}$ as $\mu\rightarrow 0$.

Hence, in such a case there exists a stable limit cycle close to the simple relaxation oscillation for each oscillator in~\eqref{eq:slow-fast-0} in the uncoupled limit $\eps=0$. In the case of scale separation and weak coupling (i.e., where $|\eps|\ll \mu\ll 1$), Izhikevich \cite{Izhikevich2000} gives a reduction to phase equations of the form~\eqref{eq:PhaseOsc}, hence to the averaged system~\eqref{eq:net-osci}.

\subsection{Phase response of coupled slow-fast oscillators and dead zones of interaction}

\begin{figure}
		\includegraphics[width=1\linewidth]{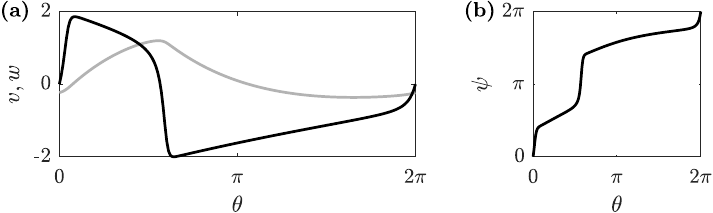}
	\caption{\label{fig:fhn_dz_f1}Panel~(a) show a single period of the limit cycle~$\gamma_{\mu}(t)$ for a FitzHugh--Nagumo oscillator~\eqref{eq:FHN1} with parameters~\eqref{eq:FHNparams}. The fast variable~$v$ is shown as a black line, the slow variable~$w$ as a gray line.
	The phase $\theta$ is chosen proportional to~$t$, such that after period~$\tau_\mu$, $\theta$ advances by~$2\pi$. We relate this to the geometric angle~$\psi$ modulo~$2\pi$ from~\eqref{eq:psitheta}. Panel~(b) shows the resulting function~$\psi(\theta)$. Observe the rapid changes in the fast variable as it switches between the two stable branches of the critical manifold.}	
\end{figure}

Proposition~\ref{prop:DZweakly} can be applied to show that System~\eqref{eq:net-osci} admits a dead zone in certain circumstances. We focus on an illustrative case of this for relaxation oscillation with two slow branches, where the coupling is localized to one of the slow branches. The example we consider is a pair of coupled Fitzhugh--Nagumo oscillators
\begin{equation}
\begin{aligned}\label{eq:FHN2}
\mu\dot v_1 &= v_1-\frac{1}{3}v_1^3-w_1 +i+\eps
I(v_1,w_1,v_2,w_2)\\
\dot w_1 &= v_1+a-b w_1 \\
\mu\dot v_2 &= v_2-\frac{1}{3}v_2^3-w_2 +i+\eps I(v_2,w_2,v_1,w_1)\\
\dot w_2 &= v_2+a-b w_2
\end{aligned}
\end{equation}
where we choose parameters
\begin{equation}
a=0.7, ~b=0.8, ~i=0.33, \mbox{ and }\mu=0.05,
\label{eq:FHNparams}
\end{equation}
and the coupling is mediated via some function~$I$ with coupling strength~$\eps$.  For $\eps=0$ the oscillators decouple into two systems of the form
\begin{equation}
\begin{aligned}\label{eq:FHN1}
\mu\dot v &= v-\frac{1}{3}v^3-w +i\\
\dot w &= v+a-b w.
\end{aligned}
\end{equation}
For the chosen parameters with $\mu=0$ the singular system has a simple relaxation oscillation which continues for small enough $\mu>0$ to give a stable limit cycle. We write this limit cycle as $(v,w)=:(V_\mu(t),W_\mu(t))=:\gamma_{\mu}(t)$ and the period as~$\tau_{\mu}$. Without loss of generality we assume $V_\mu(0)=0$ and $W_\mu(0)<0$. We define the phase on this limit cycle using $\theta(t)=t/\tau_{\mu}$ mod $1$, so that $\dot{\theta}=\omega=2\pi/\tau_{\mu}$ is constant and $(v,w)=(V_\mu(\theta \tau_{\mu}),W_\mu(\theta \tau_{\mu}))$. Figure~\ref{fig:fhn_dz_f1}(a) gives a numerical approximation of this limit cycle $\gamma_{\mu}$. All numerical computations are performed using the MATLAB {\tt ode45} integrator.

We can also define a geometric phase~$\psi$ mod $2\pi$ of the limit cycle $\gamma_{\mu}$ in the $(v,w)$-plane by recording the angle~$\gamma_\mu(t)$ makes to the line $v=v^\bullet$, $w<w^\bullet$ from a point $(v^\bullet,w^\bullet)$ within the limit cycle. This angle increases monotonically on the limit cycle. For small enough coupling the mapping between~$\theta$ and~$\psi$ is invertible and orientation preserving. More precisely, we compute
\begin{equation}
\psi = \tan^{-1}\left(\frac{V_\mu(\theta \tau_{\mu})-v^\bullet}{w^\bullet-W_\mu(\theta \tau_{\mu})}\right).
\label{eq:psitheta}
\end{equation}
The relationship between $\theta$ and~$\psi$ for~\eqref{eq:FHN1} is shown Figure~\ref{fig:fhn_dz_f1}(b) on choosing $(v^\bullet,w^\bullet)=(0,0.5)$. Observe the rapid change in $\psi$ during the fast transitions, while $\theta$ evolves at a constant speed.

\subsection{Sufficient conditions for dead zones in coupled slow-fast oscillators with separable coupling}

We start with a proposition that gives sufficient conditions for a dead zone in the phase reduced equations for coupled slow-fast oscillators of the form~\eqref{eq:slow-fast-coup} consisting of two branches where there is coupling only on one of the branches, and illustrate this for a specific example of coupled FitzHugh--Nagumo oscillators~\eqref{eq:FHN2}.

\begin{prop}
\label{prop:suff}
Suppose the uncoupled oscillators ($\eps=0$) of system~\eqref{eq:slow-fast-coup} have simple relaxation oscillations with two branches of period $\tau_0>0$ for given by $(V_0(t),W_0(t))$ and suppose the durations this limit cycle spends on first and second branch of the oscillation is $\alpha \tau_0$ and $(1-\alpha)\tau_0$. Suppose that the input is separable, i.e.,
\[
\If(v_1,w_1,v_2,w_2)=\gf^\res(v_1,w_1)\gf^\iin(v_2,w_2)
\]
and~$\gf^\res$ and~$\gf^\iin$ 
vanish on a neighborhood of the second branch; we refer to this as the \emph{dead branch}. Suppose that $0<\alpha<\frac{1}{2}$. Then there exists $\mu_0>0$ such that for any $0<\mu<\mu_0$ there is an $\eps_0>0$ (depending, in general, on $\mu$) such that for any $0<\eps<\eps_0$, the reduced coupled phase oscillator network~\eqref{eq:net-osci} has a dead zone.
\end{prop}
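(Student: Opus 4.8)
The plan is to combine the scale\nobreakdash-separated phase reduction of Reference~\onlinecite{Izhikevich2000} with the geometric dead\nobreakdash-zone criterion of Proposition~\ref{prop:geom}. The argument has two stages: a \emph{singular} stage in which we choose~$\mu_0$ so that the relaxation oscillation exists and the phase arc corresponding to the dead branch is long enough, and a \emph{weak-coupling} stage in which, for each such~$\mu$, we choose~$\eps_0$ so that the first-order averaged phase model~\eqref{eq:net-osci} is valid. The dead zone is then produced entirely at the level of~\eqref{eq:net-osci}.

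First I would fix~$\mu$ small. By the standard theory of simple relaxation oscillations (Reference~\onlinecite{Izhikevich2000}, Reference~\onlinecite[Theorem~5.5.3]{Kuehn2015}), the uncoupled oscillator~\eqref{eq:slow-fast-coup} with $\eps=0$ has an attracting limit cycle $\gamma_\mu=(V_\mu,W_\mu)$ of period~$\tau_\mu$, converging in~$C^0$ to~$\gamma_0$, and the time~$\tau_{\mu,2}$ spent near the second branch satisfies $\tau_{\mu,2}/\tau_\mu\to 1-\alpha$ as $\mu\to 0$. Parametrizing~$\gamma_\mu$ by the uniform phase $\theta\in\bbT$ and letting~$B_\mu\subset\bbT$ be the arc over which the second slow segment is traversed, we have $|B_\mu|\ge 2\pi\,\tau_{\mu,2}/\tau_\mu$; since $1-\alpha>\tfrac12$, there is $\mu_0>0$ so that $|B_\mu|>\pi$ for all $0<\mu<\mu_0$. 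As~$\gf^\res$ and~$\gf^\iin$ vanish on an open neighborhood of the dead branch and $\gamma_\mu\to\gamma_0$ in~$C^0$, for small~$\mu$ both~$\hat\gf^\res$ and~$\hat\gf^\iin$ vanish on (at least) the arc~$B_\mu$. Note that the phase response curve~$Z$ of~$\gamma_\mu$ is smooth and bounded for fixed $\mu>0$, and the possibility that $\norm{Z}$ is large near the folds is irrelevant, since~$\hat\gf^\res$ vanishes in a neighborhood of the folds bounding the dead branch.

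Next, for this fixed~$\mu$, I would invoke the phase reduction of Izhikevich~\onlinecite{Izhikevich2000}, valid since $|\eps|\ll\mu\ll 1$: there is $\eps_0>0$, depending on~$\mu$, such that for $0<\eps<\eps_0$ the dynamics of~\eqref{eq:slow-fast-coup} near the persistent invariant torus reduces to first order to~\eqref{eq:PhaseOsc}, hence to the averaged network~\eqref{eq:net-osci}. Since $\gf=(I,0)$ with $I=\gf^\res\gf^\iin$, the interaction acts only in the~$v$-component, so $\gf^\PR(\theta_j,\theta_k)=\whZ(\theta_k)\hat\gf^\iin(\theta_j)$ with combined response $\whZ=Z_v\hat\gf^\res$, and the averaged coupling function has the separable form~\eqref{eq:funHsep}. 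Applying Proposition~\ref{prop:geom} with $\Ns(\whZ)$ and $\Ns(\hat\gf^\iin)$ each containing the arc~$B_\mu$, i.e.\ with $L_1=L_2=|B_\mu|>\pi$, we obtain $L_1+L_2>2\pi$ and hence a nontrivial dead zone for~$\hf$ of length at least $2|B_\mu|-2\pi>0$. (Proposition~\ref{prop:DZweakly} could be used in the same way.)

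The step I expect to be the main obstacle is the passage from the temporal and geometric structure of the relaxation oscillation to its phase description: one must show that the neighborhood of the dead branch pulls back under~$\gamma_\mu$ to an \emph{arc} (rather than a disconnected set) whose length is bounded below by a quantity exceeding~$\pi$. This requires combining the convergence $\tau_{\mu,2}/\tau_\mu\to 1-\alpha$ with uniform $C^0$-closeness of~$\gamma_\mu$ to~$\gamma_0$ along the slow segments, together with a little care near the folds, where the fast jumps only \emph{enlarge}~$B_\mu$ and so cause no harm. Once this is in place, the remainder is routine: the phase reduction is quoted from Reference~\onlinecite{Izhikevich2000}, and the dead-zone conclusion follows exactly from Proposition~\ref{prop:geom} applied to~\eqref{eq:net-osci}.
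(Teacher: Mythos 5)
Your proposal is correct and takes essentially the same route as the paper's own proof: choose $\mu$ small so that the phase arc corresponding to the dead (second) branch has length exceeding $\pi$, invoke the Izhikevich phase reduction and averaging for $0<\eps\ll\mu$, and apply Proposition~\ref{prop:geom} with $L_1+L_2>2\pi$ to the separable averaged coupling~\eqref{eq:funHsep}. Your write-up is in fact somewhat more explicit than the paper's about why the neighborhood of the dead branch pulls back to a single phase arc of length greater than $\pi$, which the paper treats only implicitly.
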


\begin{proof}
If $0<\alpha<\frac{1}{2}$ then the assumption of a simple relaxation oscillation in the singular limit means that for small enough~$\mu$  
there is a limit cycle $\gamma_\mu(t)$ with period~$\tau_\mu$ close to~$\gamma_0(t)$ such that the durations spent in a neighborhood of the second branch is greater than $\tau_{\mu}/2$. If~$\eps$ is small enough the phase reduction and averaging means we reduce to~\eqref{eq:net-osci} where
\[
\whZ(\psi_1)\hat{\gf}(\psi_2)=Z(\psi_1)\gf^\res(\gamma_\mu(\psi_1)) \gf^\iin(\gamma_\mu(\psi_2))
\]
is zero if either $\psi_1$ or $\psi_2$ are on the dead branch. Hence, for small enough $\mu$ and $\eps$, the proportion of time spent on the dead branch, Proposition~\ref{prop:geom} can be applied with~$L_1$ and~$L_2$ bigger than some $\chi$ for some $2\pi(1-\alpha)\geq\chi>\pi$. Hence $L_1+L_2\geq \chi >2\pi$ for small enough $\mu$ and $\eps$, meaning there is a dead zone of length at least $L_1+L_2-2\pi=2\chi-2\pi>0$ for~$\hf$.
\end{proof}

\begin{figure}
		\includegraphics[width=1\linewidth]{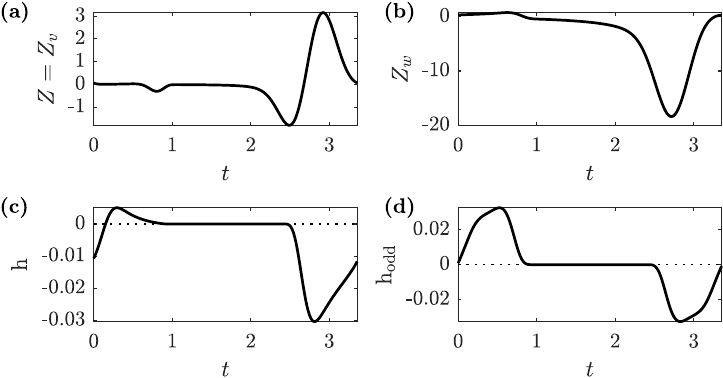}
	\caption{Top panels~(a) and~(b) show the bounded solution $Z(t)=(Z_v(t),Z_w(t))$ of the adjoint variational equation~\eqref{eq:FHNad2} for single FitzHugh--Nagumo oscillator~\eqref{eq:FHN1} with parameters~\eqref{eq:FHNparams} and $\eps=0$, for $t$ over one period of $\tau_{\mu}\approx 3.36$. Bottom panels~(c) and~(d) show the averaged phase interaction function~$\hf(\theta)$ and~$\hf_\textrm{odd}(\theta)$ for two weakly coupled oscillators~\eqref{eq:FHN2}.}
	\label{fig:fhn_dz_f2}
\end{figure}

To apply this we calculate the averaged phase equations using Malkin's method~\cite{Izhikevich2000}: this gives the infinitesimal phase response by computing the unique normalized bounded solution $Z(t)=(Z_v(t),Z_w(t))$ of the adjoint variational equation of~\eqref{eq:FHN1}, namely the periodic solution of
\begin{equation}
\begin{aligned}\label{eq:FHNad2}
\dot Z &= -\ud\ff^\mathsf{T}(\gamma_\mu(t)) Z.
\end{aligned}
\end{equation}
(where $\ud\ff^\mathsf{T}$ represents the transposed Jacobian for~\eqref{eq:FHN1}) that satisfies the condition
\[\left\langle Z(t),\dot\gamma_\mu(t)\right\rangle=1\]
for all~$t$; Izhikevich \cite{Izhikevich2000} gives expressions for this in the limit $\mu\rightarrow 0$. Since the coupling is in the first component only, we write~$Z=Z_v$ for simplicity.
A numerical approximation of the solution~$Z(t)$ of the adjoint variational equation is illustrated in Figure~\ref{fig:fhn_dz_f2}(a,b).

We consider a specific case of Proposition~\ref{prop:suff} where the system~\eqref{eq:FHN2} is coupled via
\begin{equation}
\label{eq:coupling}
\If(v_1,w_1,v_2,w_2)=\begin{cases}v_1v_2&\text{if $v_1$ and $v_2>0$,}\\ 0 &\text{otherwise}.\end{cases}
\end{equation}
This coupling acts in the first components only and is separable with identical input and response function. This choice of coupling clearly has a dead zone for the system~\eqref{eq:FHN2}; we demonstrate that this can lead to a dead zone for the phase reduced and averaged systems.

The averaged phase interaction for two coupled oscillators~\eqref{eq:FHN2} with coupling mediated by~\eqref{eq:coupling} can now be computed from~\eqref{eq:funH} as shown in Figure~\ref{fig:fhn_dz_f2}(c). This yields phase dynamics
\begin{equation}
\begin{aligned}
\dot{\theta}_1&=\omega +\eps \hf(\theta_2-\theta_1)\\
\dot{\theta}_2&=\omega +\eps \hf(\theta_1-\theta_2)\\
\end{aligned}
\end{equation}
where $\omega=2\pi/\tau_{\mu}$. Note the presence of constant zones in~$\hf(\theta)$: These become dead zones on choice of appropriate~$\omega$. Finally, Figure~\ref{fig:fhn_dz_f2}(d) shows the averaged phase difference for $\Phi=\theta_1-\theta_2$: Its evolution is governed by
\begin{equation}
\dot{\Phi}=-\eps \hf_\textrm{odd}(\Phi)
\end{equation}
where 
\[\hf_\textrm{odd}(\Phi)=\hf(\Phi)-\hf(-\Phi)\]
is twice the odd part of~$\hf$.

\begin{figure*}
		\includegraphics[width=0.49\linewidth]{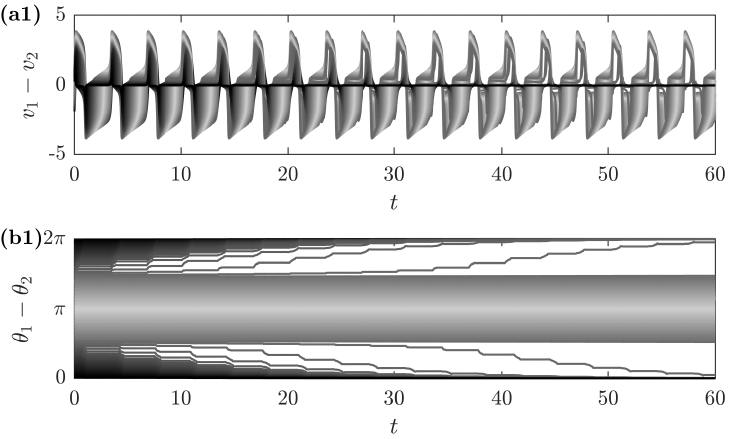}\hfill
		\includegraphics[width=0.49\linewidth]{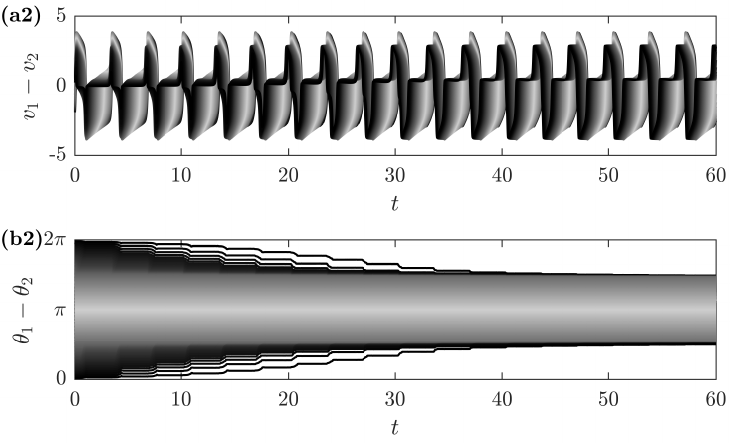}
	\caption{An ensemble of~$100$ evenly spaced phase differences $\theta_1-\theta_2$ on the uncoupled limit cycle are taken as initial conditions for two coupled FitzHugh--Nagumo systems~\eqref{eq:FHN2} with coupling~\eqref{eq:coupling} and parameters~\eqref{eq:FHNparams}
	 on a periodic colorscale.
	 The panels show the cases $\eps=0.04$ (case~1; left) and $\eps=-0.04$ (case~2; right). 
	 The top panels~(a) shows the difference $v_1-v_2$ for solutions starting in the ensemble of initial conditions. 
	 The bottom panels~(b) shows the evolution of the extracted phase differences $\theta_1-\theta_2=\Phi$ obtained by computing the geometric phase for each oscillator and inferring the phase $\theta_k$ for the corresponding uncoupled oscillator.
	 Both cases show the dead zone for the phase difference: Initial conditions with phase difference starting in an interval containing the antiphase solution $\Phi=\pi$ remain fixed while those outside this interval evolve towards the in-phase oscillation $\theta_1=\theta_2$ in case~1 and the boundary of the dead zone in case~2.}
	\label{fig:fhn_dz_f34}
\end{figure*}

For the chosen parameters, $\hf_\textrm{odd}$ has an dead zone for phases in a neighborhood of the antiphase solution.  Figure~\ref{fig:fhn_dz_f34} confirms the presence of this dead zone in simulations of the original equations~\eqref{eq:FHN2} with coupling~\eqref{eq:coupling} for coupling with $\eps=0.04$ and $-0.04$.
The top panels~(a) show $v_1-v_2$ for an ensemble of 100 initial conditions starting at evenly spaced phase differences. 
The bottom panels~(b) show the evolution of the difference $\Phi=\theta_1-\theta_2$ of the phases~$\theta_1$ and~$\theta_2$ from computing the geometric angles~$\psi_k(t)$ according to~\eqref{eq:psitheta}. Observe that for either sign of $\epsilon$ (in both cases~1 and~2) there is a band of initial conditions where the relative phases do not change. Note that the time series in panels~(a) do not show the dead zone as obvious: This is only clear after extraction of the phase angle. On examining the phase differences it becomes clear that case~1 has an attracting in-phase solution and is repelling on the boundary of the dead zone, while in case~2 the stabilities are reversed. The location of the dead zone agrees well with the averaged phase difference dynamics shown in the bottom right panel of Figure~\ref{fig:fhn_dz_f2}.

Keeping all parameters the same except for going to larger values of the parameter~$i$, the induced dead zone disappears (not shown) once the residence time on the dead branch becomes less than 1/2 of the cycle: This is because in this case the condition $L_1+L_2>2\pi$ in Proposition~\ref{prop:suff} no longer holds.

\subsection{Approximate Dead Zones for Additive Coupling}

In the previous section, separable coupling with identical input and response functions localized on one branch led to emergence of dead zones where the phase interaction function is exactly zero. For coupled neural oscillators, the interaction is often in a pulsatile way when the neuron ``fires''. We will now show that relaxation oscillators with pulsatile coupling give rise to \emph{approximate} dead zones in the averaged phase dynamics.

\begin{defn}\label{def:ApproxDZ}
Consider the network dynamical system~\eqref{eq:Network} and let $\eta>0$. An \emph{$\eta$-approximate dead zone} of the coupling function~$\gf$ is a maximal connected open set $U\subset\cM^2$ such that $\norm{\gf}_U\leq \eta$ where $\norm{\,\cdot\,}_U$ is the uniform norm on~$U$.
\end{defn}

Note that the notion of an $\eta$-approximate dead zone depend on the choice of norm on the tangent space.

We now consider a variation of the FitzHugh--Nagumo equations~\eqref{eq:FHN2} with pulsatile coupling. More specifically, the dynamics evolve according to
\begin{equation}
\begin{aligned}\label{eq:FHN3}
\dot v_1 &= v_1-\frac{1}{3}v_1^3-w_1 +i+\eps P(v_2)\\
\dot w_1 &= \mu (v_1+a-b w_1) \\
\dot v_2 &= v_2-\frac{1}{3}v_2^3-w_2 +i+\eps P(v_1)\\
\dot w_2 &= \mu (v_2+a-b w_2) 
\end{aligned}
\end{equation}
with parameters as above and~$P>0$ a pulse-like function, that is, its support is contained in some interval~$[\alpha, \beta]$ of phases on the limit cycle and $\frac{1}{2\pi}\int_\bbT P(\nu^{-1}(\vartheta))\,\ud\vartheta=1$. Note that the coupling is still separable, but the response function is constant and nonzero on the entire limit cycle. That means that the phase response is solely determined by the phase response curve~$Z$ of the individual unit.

The phase response curve~$Z$ in the fast variable has a specific form as shown in Reference~\onlinecite{Izhikevich2000}: In the singular limit of $\mu\to 0$ the~$Z$ converges pointwise to zero under the assumption that $|\eps| \ll \mu\ll 1$; this is illustrated in Figure~\ref{fig:fhn_dz_etaDZ}. 
That is, on the slow branch, the phase response is only nonzero close to the fast transitions along the orbit since by assumption the attraction to the slow branch is stronger than the coupling in the fast direction. Together with the pulsatile coupling, this now leads to the emergence of approximate dead zones.

This observation holds more generally and leads to the following result:

\begin{prop}\label{prop:etaDZ}
Suppose the uncoupled ($\eps=0$) oscillators of system~\eqref{eq:slow-fast-coup} have simple relaxation oscillations with~$Q$ slow branches.
Let $\eta > 0$ and suppose that the support $[\alpha,\beta]$ of the pulse function~$P$ is sufficiently narrow in the sense that its length is less than $\max_q 2\pi(\tau_{0,q}/\tau_0)$.
Then for sufficiently small~$\mu$ the coupling function~$\hf$ has an $\eta$-approximate dead zone for the averaged phase oscillator network~\eqref{eq:net-osci}.
\end{prop}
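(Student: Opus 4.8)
The idea is that in~\eqref{eq:FHN3} the coupling acts only in the fast variable \emph{and} the response function is constant, say $\gf^\res\equiv c_0\neq 0$, so that the averaged phase interaction function~\eqref{eq:funHsep} is just a convolution of the fast--variable phase response curve $Z=Z_v$ against the pulse. Writing $\hat P(s):=P(\nu^{-1}(s))$ for the pulse in the phase variable --- nonnegative, supported in the arc $[\alpha,\beta]$, and normalised so that $\frac{1}{2\pi}\int_0^{2\pi}\hat P(s)\,\ud s=1$ --- Equation~\eqref{eq:funHsep} becomes
\[
\hf(\vartheta)=\dfrac{c_0}{2\pi}\int_0^{2\pi} Z(\vartheta+s)\,\hat P(s)\,\ud s ,
\]
whence
\[
\abs{\hf(\vartheta)}\leq \abs{c_0}\sup_{s\in\vartheta+[\alpha,\beta]}\abs{Z(s)}\qquad\text{for all }\vartheta\in\bbT .
\]
It therefore suffices to produce a nondegenerate arc of phases $\vartheta$ on which the translate $\vartheta+[\alpha,\beta]$ stays inside a region where $Z$ is uniformly small.

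First I would pin down the geometry of the limit cycle. Let $q^\ast$ be a slow branch realising $\max_q\tau_{0,q}$, so that its limiting phase length is $L^\ast:=2\pi\,\tau_{0,q^\ast}/\tau_0=\max_q 2\pi\,\tau_{0,q}/\tau_0\geq\beta-\alpha$ by hypothesis. By the relaxation--oscillation estimates recalled above, for small $\mu$ the stable limit cycle $\gamma_\mu$ spends near this branch a phase length $2\pi\,\tau_{\mu,q^\ast}/\tau_\mu$ which converges to $L^\ast$, while the two fast transitions bounding it occupy phase arcs of length tending to $0$ as $\mu\to 0$. Hence one can fix a nondegenerate arc $I\subset\bbT$ and, for all small enough $\mu$, a compact arc $K_\mu\subset\bbT$ with $\vartheta+[\alpha,\beta]\subset K_\mu$ for every $\vartheta\in I$, such that $K_\mu$ lies in the phase arc of the $q^\ast$ branch and stays a fixed distance from both adjacent transitions. (When $\beta-\alpha<L^\ast$ this $I$ is a genuine interval; the borderline case $\beta-\alpha=L^\ast$ is handled identically once one notes that for small $\mu$ this residence arc can be taken slightly longer than $\beta-\alpha$.)

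Next I would invoke the singular--limit form of the phase response curve for relaxation oscillators from Reference~\onlinecite{Izhikevich2000}: under the standing assumption $\abs{\eps}\ll\mu\ll 1$ (needed anyway for the phase reduction and averaging leading to~\eqref{eq:net-osci}), the fast component $Z=Z_v$ tends to zero as $\mu\to 0$, uniformly on compact subsets of the interior of each slow branch. Indeed the normalisation $\vp{Z(t)}{\dot\gamma_\mu(t)}=1$ forces the ``mass'' of $Z$ to concentrate at the fast transitions, where $\norm{\dot\gamma_\mu}$ is large, while the strong contraction in the fast direction along a slow branch makes $Z_v$ negligible there. In particular $\sup_{K_\mu}\abs{Z}\to 0$ as $\mu\to 0$. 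Combined with the estimate above, given $\eta>0$ there is $\mu_0>0$ so that for $0<\mu<\mu_0$ one has $\abs{\hf(\vartheta)}\leq\abs{c_0}\sup_{K_\mu}\abs{Z}\leq\eta$ for every $\vartheta\in I$; then $\hf$ has an $\eta$-approximate dead zone for~\eqref{eq:net-osci} (the maximal connected open set containing $\mathrm{int}(I)$ on which $\norm{\hf}\leq\eta$).

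The main obstacle is the uniform smallness of $Z_v$ on the compact arc $K_\mu$. Pointwise convergence $Z_v\to 0$ on the slow branch, together with a uniform-in-$\mu$ bound on $Z_v$ there, gives (via dominated convergence) only that $\hf(\vartheta)\to 0$ for each fixed $\vartheta$, whereas an $\eta$-approximate dead zone requires the bound uniformly over the whole arc $I$. Upgrading to uniform convergence requires either the explicit singular-limit expressions for $Z$ on the slow manifold or an equicontinuity argument for the adjoint equation $\dot Z=-\ud\ff^{\mathsf T}(\gamma_\mu(t))Z$ restricted to the slow segment, and one must track the $\mu$-dependence of the transition arcs carefully so that $K_\mu$ can be chosen bounded away from them uniformly in $\mu$. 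The remaining estimates are routine.
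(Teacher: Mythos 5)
Your proposal is correct and follows essentially the same route as the paper: fit the (sufficiently narrow) pulse support inside the phase arc of one slow branch, use the singular-limit smallness of the fast-component phase response~$Z$ there for small~$\mu$, and bound $\abs{\hf(\vartheta)}$ by the sup of $\abs{Z}$ over the translated support times the normalised pulse mass. The uniform-smallness issue you flag as the ``main obstacle'' is in fact handled no more carefully in the paper, which simply takes $\mu$ small enough that $\abs{Z}<\eta$ on the relevant slow arc, citing the singular-limit behaviour of~$Z$ from Reference~\onlinecite{Izhikevich2000}.
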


\begin{proof}
Define the arcs $C^\phi_q:=(t_{q}(2\pi/\tau_0)-\phi, t_{q+1}(2\pi/\tau_0)-\phi)\subset\bbT$ of phases on different segments of the critical manifold in the singular limit offset by~$\phi$.
By assumption, there is a $q'\in\sset{1, \dotsc, Q}$ and an open interval of phases $A\subset\bbT$ such that for $\psi\in A$ we have $[\alpha,\beta]\subset C^\psi_{q'}$ in terms of arcs on~$\bbT$. In other words, the support of the pulse function is sufficiently narrow to be fully contained in one of the segments of slow evolution in the singular limit.

We may now take~$\mu$ small enough such that $\abs{Z(\phi)}<\eta$ for $\phi\in C_{q'}$. With the definition of the averaged interaction function~\eqref{eq:funH} we have for any $\vartheta\in A$ that
\begin{align*}
    \abs{\hf(\vartheta)}&\leq\dfrac{1}{2\pi}\int_0^{2\pi}\abs{Z(\vartheta+s)}\abs{P(\nu^{-1}(s))}\,\ud s\\&
    \leq\dfrac{\eta}{2\pi}\int_{[\alpha,\beta]}\abs{P(\nu^{-1}(s))}\,\ud s \leq \eta,
\end{align*}
which proves the assertion that~$A$ is an $\eta$-approximate dead zone for~\eqref{eq:net-osci}.
\end{proof}

\begin{figure}
		\includegraphics[width=1\linewidth]{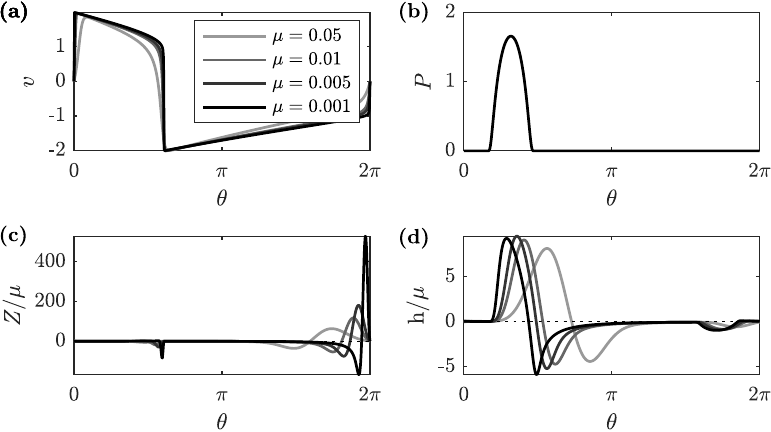}
	\caption{Approximate dead zones arise for coupled relaxation oscillators with pulsatile coupling~\eqref{eq:FHN3}. Panel~(a) shows the limit cycle oscillation for varying~$\mu$. Panel~(b) shows the pulse function in terms of the phase on the limit cycle. Panel~(c) shows the (first component of the) phase response curve as it converges to zero on the slow branches as $\mu\to 0$. The emergence of approximate dead zones can be seen in the resulting coupling function in Panel~(d) as~$\mu$ is decreased.}
	\label{fig:fhn_dz_etaDZ}
\end{figure}

We illustrate the emergence of $\eta$-approximate dead zones for coupled FitzHugh--Nagumo oscillators~\eqref{eq:FHN3} with pulsatile coupling. More explicitly, define the bump function
\[
\tilde P(x) := \begin{cases}\exp\left(-\frac{1}{1-x^2}\right)&\text{if} -1<x<1,\\
0& \text{otherwise}\end{cases}
\]
with support $[-1,1]$. With suitable normalization constant~$a$, scaling~$b=\frac{1}{2}$, and shift $c=1$ the function $P(\phi) = a \tilde P\big(\frac{\phi}{b}-c\big)$ with argument taken modulo~$2\pi$ is a pulse function with support $\big[\frac{1}{2}, \frac{3}{2}\big]$; see~Figure~\ref{fig:fhn_dz_etaDZ}(b). As the timescale parameter~$\mu$ is varied, the phase response function for the first component is converging pointwise to zero on the slow branches. This results in $\eta$-approximate dead zones according to Proposition~\ref{prop:etaDZ} for the pulse function~$P$, show in Figure~\ref{fig:fhn_dz_etaDZ}(d). Since the pulse is sufficiently narrow, there are actually two regions where the resulting coupling function~$\hf$ is small: First, for values $\theta\approx 0$ where the pulse aligns with the first slow branch, and around $\theta\approx 4$ when the shifted pulse aligns with the second branch.

\section{Discussion}
\label{sec:discuss}

We give some results that relate the presence of dead zones in the interaction of limit cycle oscillators~\eqref{eq:Network} with dead zones for reduced phase models~\eqref{eq:PhaseOsc}, valid in the weak coupling limit. In doing so we have highlighted that the connection may be subtle: There are cases where a dead zone for the former may or may not be inherited by the latter. Moreover, there are cases where a dead zone for the latter may not be associated with a dead zone of the former, although we suggest this is atypical. We give in Propositions~\ref{DZweak}, \ref{prop:res1}, \ref{prop:geom} and \ref{prop:DZweakly} some sufficient conditions for dead zones of averaged or non-averaged phase equations to result from dead zones of~\eqref{eq:Network}. We do not attempt to give necessary conditions and are not convinced this will be very transparent or instructive.
Although one could object to dead zones on the grounds of such constant sets are highly non-generic in smooth models, they can and do arise as a result of modelling assumptions. Moreover, approximate dead zones (regions where there is very little response) will be robust to small enough perturbations.

Given the periodicity of the phase coupling function $\hf(\vartheta)$  in $\vartheta$, an obvious approach \cite{Daido1996} is to consider a Fourier expansion of $\hf$. However, analyticity of any finite truncated Fourier expansion means that the only dead zones that will persist under truncation will be trivial. In general, the truncated Fourier representation of an $\hf$ with nontrivial dead zones will only have approximate dead zones.

We illustrate these mechanisms explicitly in Propositions~\ref{prop:suff} and \ref{prop:etaDZ} for an example of weakly coupled relaxation oscillators\cite{Izhikevich2000}, a dynamical system with two small parameters~\cite{Kuehn2020a}. The geometry that shapes the limit cycle oscillators yields an explicit calculation of phase response curves and thus allows for a concrete analysis of emergent dead zones. One ingredient for the emergence of dead zones is that there are regions where the phase response is trivial; this is not the case if the phase response is sinusoidal, for example, close to a Hopf bifurcation. On the other hand, dead zones could arise naturally in coupled piecewise continuous models of coupled oscillators \cite{AshCooNic2016} (such as the McKean model) where the coupling is also defined piecewise. It would be interesting to elucidate the emergence of dead zones for other weakly coupled but strongly nonlinear oscillations, such as limit cycles close to or emerging from homoclinic or heteroclinic structures~\cite{Homburg2010}.

Turning to forced rather than coupled oscillators, phase response of impulsively forced oscillator is another context where dead zones may be useful to understand circumstances where the forcing may or may not have an effect. For example, the model of temporally forced circadian transcriptional oscillators is shown in Reference~\onlinecite{UriTei2019} to have a region along the oscillation where the phase response (almost) vanishes; we argue that this would lead to dead zones if such oscillators are coupled, for example, as shown in Proposition~\ref{prop:geom}. Their model of a \emph{Drosophia} circadian clock is an interaction of three species, such that a coefficient multiplying the input is effectively zero for part of the oscillation. This lack of phase sensitivity at certain phases may be of biological utility if it allows interaction with the environment only for part of the cycle.

In this paper we consider only pairwise interaction of systems. It will be interesting to understand the role of dead zones in coupled dynamical systems with multi-way interactions (see Remark~\ref{rem:DZGeometry}).
Similarly, dead zones for approximations of the phase dynamics beyond first order (e.g., Reference~\onlinecite{Leon2019a}) will have higher order features of the geometry of the isochrons (curvature, etc) that will play a role.

Finally, it may be interesting to examine the effects of dead zones on coupled chaotic oscillators where no phase reduction is possible but nonetheless synchronization can occur~\cite{anishchenko1992synchronization}. Similarly, forced coupled oscillator systems~\cite{Anishchenko2009} have the potential for dead zones in the coupling and/or forcing.

\subsection*{Acknowledgements}

The research of PA and CP was funded by EPSRC Centre for Predictive Modelling in Healthcare, grant number EP/N014391/1. CB received funding by the EPSRC through grant EP/T013613/1.

\subsection*{Data Availability Statement}

The MATLAB code that generates the data in support the findings of this study is openly available in  {\tt https://github.com} in the repository {\tt /peterashwin/dead-zone-reduction-2021}.



\bibliographystyle{unsrt}
\def\urlprefix{}
\def\url#1{}

\bibliography{DZNeur_refs}

\end{document}